\pgfplotsset{compat=1.15}
\definecolor{light-gray}{gray}{0.95}
\newtheorem{theorem}{Theorem}
\newtheorem{lemma}[theorem]{Lemma}
\newtheorem{coro}[theorem]{Corollary}
\theoremstyle{definition}
\newtheorem{remark}[theorem]{Remark}
\newcommand{\eps}{\varepsilon}
\renewcommand{\phi}{\varphi}
\newcommand{\e}{\varepsilon}
\newcommand{\N}{\mathbb{N}}
\newcommand{\R}{\mathbb{R}}
\newcommand{\be}{\begin{equation}}
\newcommand{\ee}{\end{equation}}
\newcommand{\co}{\colon\, }
\newcommand{\contada}{\text{\rm cont}}
\newcommand{\contnon}{\text{\rm cont-non}}
\DeclareMathOperator{\Lip}{Lip}
\DeclareMathOperator{\diam}{diam}
\DeclareMathOperator{\dist}{dist}
\newcommand*\oline[1]{%
  \vbox{%
    \hrule height 0.5pt%                  % Line above with certain width
    \kern0.25ex%                          % Distance between line and content
    \hbox{%
      \kern-0.1em%                        % Distance between content and left side of box, negative values for lines shorter than content
      \ifmmode#1\else\ensuremath{#1}\fi%  % The content, typeset in dependence of mode
      \kern-0.1em%                        % Distance between content and left side of box, negative values for lines shorter than content
    }% end of hbox
  }% end of vbox
}
\title[How many measurements?]{How many 
continuous 
measurements are needed 
to learn a vector?
}
\author{David Krieg, Erich Novak and Mario Ullrich}
\date{\today}
\keywords{information-based complexity, 
optimal algorithms, adaption, continuous measurements}
\begin{document}

\begin{abstract}
One can recover vectors from $\R^m$ with arbitrary precision, 
using only 
$\lceil \log_2(m+1)\rceil +1$
continuous measurements that are 
chosen adaptively. 
This surprising result is explained and discussed,
and we present applications to 
infinite-dimensional 
approximation problems. 
\end{abstract}

\maketitle

%%%%%%%%%%%   

\section{Introduction}
\label{sec:intro}

Assume that you want to recover an arbitrary 
vector $x \in \R^m$, 
up to some error $\e >0$
in some norm $\Vert\cdot\Vert$, where 
$m \in \N$ can be large. 
You know nothing about $x$, you can only 
compute certain measurements 
$\lambda_1(x),\dots,\lambda_n(x) \in \R$.
How many measurements do you need?

\medskip

If the measurement maps $\lambda_1,\hdots,
\lambda_n$ are linear, you will quickly 
realize that less than $m$ measurements 
are useless. Let us denote 
$N=(\lambda_1,\hdots,\lambda_n)$ with $n<m$. 
Then, no matter the value of the information 
$y=N(x)$ that you obtain, there always is a whole 
affine subspace $V$ of $\R^m$ such that $N(v)=y$ 
for all $v \in V$. So no matter how you choose 
your approximation $\hat x = \Phi(y) \in\R^m$, you may 
be arbitrarily far away from the 
true value of $x$.
The same is true for any continuous measurement map $N \colon \R^m \to \R^n$. 
This follows from the Borsuk-Ulam theorem:
For any $R>0$, there are $x$ and $\tilde x$ at 
distance $2R$ such that $N(x)=N(\tilde x)$, i.e., 
you cannot distinguish between those vectors 
and hence cannot guarantee an error less than $R$.

\medskip

But why should we fix the measurement maps 
$\lambda_j$ in advance? Would it not be better to use the already obtained information 
$\lambda_1(x)$,\ldots, $\lambda_{j-1}(x)$ and 
choose $\lambda_j$ based on it? We call such 
measurements adaptive. 
Unfortunately, for linear functionals $\lambda_j$, 
the fact remains that 
less than $m$ measurements are useless, 
also if we choose them adaptively.

Indeed, assume that you obtain the information $y=N(x)$. Then, via $y$, 
you also know which functionals $\lambda_1,\hdots,\lambda_n$ have been chosen and
there is a whole affine subspace $V\subset \R^m$ such that $\lambda_j(v)=y_j$ for all $j\le n$. Thus, for any $v\in V$, you would have chosen the 
same functionals $\lambda_j$ and obtained the same 
information $y=N(v)$, which means that you cannot 
distinguish between all the elements of $V$.

\medskip

Thus, less than $m$ non-adaptive linear measurements are useless and neither 
adaption nor allowing non-linear, 
continuous measurements
improve the 
situation.\footnote{It is known
that also randomization does not help,
this follows from
(2) in \cite{Novak92}.
} 
But what happens if we consider both together, i.e., if we allow adaptively chosen continuous 
measurements?
Can we then achieve something with less than $m$ measurements? 

\medskip

Surprisingly, the answer is yes.
In fact, 
then the number $n$ of measurements 
can be chosen
exponentially smaller than  the 
dimension $m$ of the space. 
We will show that
 \[
 n (m) := \lceil \log_2(m+1)\rceil +1 
 \]
 continuous adaptive measurements suffice:

 \begin{theorem}\label{thm:main}
Let $m\in\N$ and $\eps>0$. The algorithm $R_m^\eps$ described below 
uses at most $n(m)$
%$n (m) = \lceil \log_2(m+1)\rceil +1 $
adaptive, Lipschitz-continuous measurements
and satisfies for all $x\in\R^m$ that
\[
 \Vert x - R_m^\eps(x) \Vert \,\le\, \eps.
\]
\end{theorem}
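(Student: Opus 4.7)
The plan is a two-stage algorithm: one ``scale-detection'' measurement, followed by $\lceil\log_2(m+1)\rceil$ further adaptive measurements implementing a kind of binary search.

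For the first measurement I would take $\lambda_1(x) := \|x\|_\infty$, which is $1$-Lipschitz. The observed value $R$ tells us that $x \in [-R,R]^m$, and in fact that at least one coordinate of $x$ has modulus exactly $R$. If $R \leq \eps$, outputting $\hat x := 0$ already satisfies $\|x - \hat x\|_\infty \leq \eps$ and no further measurements are needed.

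Otherwise, the task reduces to recovering $x$ in the compact region $[-R,R]^m$ to precision $\eps$ using $k := \lceil \log_2(m+1)\rceil$ additional Lipschitz measurements. The idea is to perform an adaptive binary search on a family of $m+1$ pieces of (the boundary of) $[-R,R]^m$, each designed so that, once its identity is known, a real number read off from one of the measurements pins down $x$ within $\eps$. At step $j = 2,\ldots, k+1$, the Lipschitz function $\lambda_j$ is built from the previously observed values so that $\lambda_j(x)$ reveals which subset of the remaining candidate pieces contains $x$; after $k$ such queries a single piece has been identified, and the resulting estimate $\hat x$ satisfies $\|x-\hat x\|\le \eps$. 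Because the Lipschitz constants of $\lambda_2, \ldots, \lambda_{k+1}$ are allowed to be large (depending on $R$, $\eps$, $m$), a single real measurement has in principle enough capacity to jointly encode the branch of the tree and a coordinate-level resolution.

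The main obstacle is the explicit construction of the pieces and branching functions. A continuous map $\R^m \to \R$ cannot be injective on any bounded $m$-dimensional region, so the pieces must be chosen to be ``low-dimensional at scale $\eps$'', for instance $\eps$-neighborhoods of axis-aligned segments, or analogous essentially one-dimensional skeleta of the cube so that one real measurement can parametrize each piece modulo $\eps$. Once the pieces are fixed, the functions $\lambda_j$ can be built locally on the relevant piece and extended to $\R^m$ as Lipschitz functions via the McShane--Whitney extension theorem, but one must check that the branching of the binary search is consistent across different extensions, i.e.\ that the equivalence class $\{x' : x' \text{ produces the same measurement sequence as } x\}$ has diameter at most $2\eps$. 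This consistency check, together with the combinatorial bookkeeping that makes the total count equal to $\lceil\log_2(m+1)\rceil + 1$, is the heart of the argument.
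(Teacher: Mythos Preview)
Your proposal has the right budget and the right high-level shape (one setup measurement, then $\lceil\log_2(m+1)\rceil$ bisection steps), but the decomposition you sketch cannot work. You want $m+1$ pieces covering $\partial[-R,R]^m$, each so ``thin'' that a single real value pins $x$ down to within $\eps$ on it. That forces every piece to be, at scale $\eps$, essentially one-dimensional (the level sets of one Lipschitz functional on it must all have diameter $\le 2\eps$), whereas the cube boundary is $(m-1)$-dimensional. For $m\ge 3$ no finite collection of such pieces can cover it once $R/\eps$ is moderately large; the $\eps$-entropies do not match. So the ``explicit construction of the pieces'' you flag as the main obstacle is in fact impossible in this form, not merely tedious, and the consistency check you mention never gets off the ground.

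The paper's construction is different in kind. One partitions all of $\R^m$ (no preliminary scale measurement is used) into \emph{infinitely many} cells of diameter $\le 1$, scaled by $\eps$, and assigns each cell one of $m+1$ \emph{colors} so that any two cells of the same color are at distance $\ge c>0$. The coloring comes from thickened $k$-skeleta of the integer lattice for $k=0,\dots,m$; that $m+1$ colors suffice is exactly the statement that $\R^m$ has Lebesgue covering dimension $m$. The $\lceil\log_2(m+1)\rceil$ bisection measurements are the distance functionals $\lambda_J(x)=\dist\bigl(x,\bigcup_{r\in J}E_r\bigr)$ and serve only to determine the color $r^*$ of the cell containing $x$. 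A \emph{separate} final measurement, $\lambda^*(x)=\max_{i\in I_{r^*}}\{c\eps/(2i)-\dist(x,\eps D_i)\}$, then identifies which of the infinitely many cells of color $r^*$ contains $x$; this works precisely because the uniform gap $c$ makes the values of $\lambda^*$ on distinct same-colored cells pairwise disjoint. The missing idea in your outline is this two-level structure: the binary search is over $m+1$ \emph{color classes}, not $m+1$ pieces, and each class is an infinite union of small, well-separated cells.
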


 \medskip
 
 We define the algorithm and prove the result in Section~\ref{sec:finite}.
 We do not know whether the bound is optimal. 
From the Borsuk-Ulam Theorem 
we only know 
that one continuous measurement 
is not enough if $m>1$.

\medskip

 Further discussion of the result can be found in Section~\ref{sec:discussion}.
 In Section~\ref{sec:infinite}, we will explore some implications 
 of the above result for infinite-dimensional problems.

% \section{The result in finite dimension}
\section{Proof of Theorem~\ref{thm:main}}
\label{sec:finite}

In this section, we prove our main result, Theorem~\ref{thm:main}, by presenting an algorithm that achieves the bound.

\medskip

\noindent
{\bf Algorithm:} \ 
For $m\in\N$ and $\eps>0$, the algorithm $R_m^\eps$ is defined as follows.
Consider a partition
$$
\R^m = \bigcup_{i \in \N} D_i, 
$$
a (coloring) map
$t\colon\N\to\{ 1,2, \dots , m+1 \}$,
and a constant $c>0$ with:
\begin{enumerate} 
\itemsep=1mm
\item 
$\diam (D_i) \le 1$ for each $i$;

\item 
if $i \not= j$ and $t(i)=t(j)$ then 
$\dist (D_i, D_j) \ge c$, %>0
\end{enumerate} 
where diameter ($\diam$) and distance ($\dist$) are with respect to the given norm on $\R^m$. 
We will show later that such a partition exists. 
%and that 
Less than $m+1$ colors do not work, see Remark~\ref{rem:color}.

To find 
an $\eps$-approximation of $x\in\R^m$ 
it is clearly enough to find 
an index $i^*$ 
such that $x$ is in the closure of $\eps D_{i^*}$. 
Define by 
$$
I_r := \{i\in\N\colon t(i) = r\} 
\quad\text{ and }\quad
E_r  = \bigcup_{ j\in I_r} \eps D_j
% E_r  = \bigcup_{ \ t(D_j) = r} D_j
$$
the set of points with color $r$. 
A continuous measurement 
of the form 
$$
\lambda_{J} (x) = \dist \biggl( 
x, 
\bigcup_{j\in J}  E_{j} 
\biggr)  
$$
with $J\subset\{1,\dots,m+1\}$, 
tells us whether or not $x$ 
is contained in the closure of any of the sets $E_j$, $j\in J$.
We use
$
n = \lceil \log_2(m+1)\rceil = n(m) -1  
$
such functionals
and bisection to find a color 
$r^*=t(i^*)$ such that 
$x \in 
\oline{E}_{r^*}$.\footnote{We do not claim 
that $r$ and $i^*$ are  unique. 
}

Now we can determine a correct index $i^*$
with $x\in 
\eps \overline{D}_{i*}$
using   
any continuous functional $\lambda^*$ for which the images $\lambda^*(\eps\oline{ D_i})$ for $i\in I_{r^*}$ are pairwise disjoint. An example is given by
\begin{equation*}\label{eq:last-info}
% \lambda^*  (x) = \max_{i\in\N} \left\{\frac1i - \dist\left(\frac{x}{\eps},D_i\right), \frac{c}{2}\right\}
% \lambda^*  (x) = \max_{i\in\N} \left\{\frac{1}{i} - \frac{2}{c\eps}\,\dist\left(x, \eps D_i\right)\right\} 
\lambda^*  (x) = \max_{i\in I_{r^*}} \left\{\frac{c\eps}{2i} - \dist\left(x, \eps D_i\right)\right\}.  
\end{equation*}
Note that, for each $x\in\R^m$, 
at most one of the terms in the maximum is non-negative. 
Since $x$ is contained in the closure of $\eps D_{i^*}$ 
with $i^*=\frac{c\eps}{2\lambda^* (x)}$,
the output $R_m^\eps(x)$ of the algorithm
can be chosen as any element from $\eps D_{i^*}$.

All the functionals above have Lipschitz-constant equal to one
% , but they can be made arbitrarily small by scaling. 
and, if we consider the $1$- or $\infty$-norm, 
they are even piecewise linear.
One may consider 
the functionals $\lambda_J$ 
as continuous membership oracles.

\medskip

It remains to find a partition 
$
\R^m = \bigcup_{i \in \N} D_i 
$
with the properties above. 
We only consider the 
maximum norm on $\R^m$.
The general case is obtained by 
the equivalence of all norms on $\R^m$.

The idea is to split $\R^m$ into unit cubes 
and give all $k$-dimensional facets the color $k$.
This does not quite work yet, since different facets of the same dimension $k$ touch.
However, if we add a small neighborhood to the $(k-1)$-dimensional facets,
then the $k$-dimensional facets do not touch any more, see the right hand side of Figure~\ref{fig:partition}. 

More precisely, %for $k \in \{0,\hdots,m\}$ 
let us denote by $F_k$ the $k$-dimensional facets, i.e., the set of vectors in $\R^m$ with at least $m-k$ integer components, and with $F_k^\delta$ for $\delta>0$ the closed $\delta$-neighborhood of $F_k$ in the maximum norm. We choose $1/2> \delta_0 > \dots > \delta_m >0$ and 
% put 
inductively define $D_0:= F_0^{\delta_0}$ and 
\[
 D_k := F_k^{\delta_k} \setminus \bigcup_{i< k} D_i, 
 \quad k=1,\hdots,m.
\]
It is easy to verify that this partition has the desired properties. With this, the proof of Theorem~\ref{thm:main} is complete.

The above partition is certainly not the only admissible one, another partition for $m=2$ is shown on the left hand side of Figure~\ref{fig:partition}.

 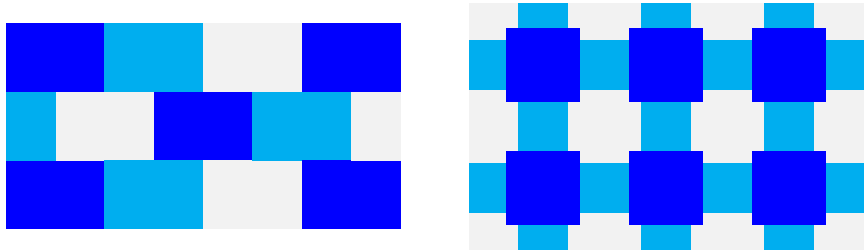
\begin{figure}[H]
 
\hfill
\resizebox{0.43\linewidth}{!}{
\begin{tikzpicture}[line cap=round,line join=round,>=triangle 45,x=0.9cm,y=1.25cm]
% \draw [color=black,, xstep=1cm,ystep=1cm] (-0.5,-0.5) grid (9.9,3.5);
\clip(1,-.3) rectangle (9,4);
\fill[line width=2pt,color=blue,fill=blue,fill opacity=1] (1,1) -- (3,1) -- (3,0) -- (1,0) -- cycle;
\fill[line width=2pt,fill=light-gray] (2,2) -- (4,2) -- (4,1) -- (2,1) -- cycle;
\fill[line width=2pt,color=blue,fill=blue,fill opacity=1] (1,2) -- (3,2) -- (3,3) -- (1,3) -- cycle;
\fill[line width=2pt,color=cyan,fill=cyan,fill opacity=0.9] (3,3) -- (5,3) -- (5,2) -- (3,2) -- cycle;
\fill[line width=2pt,fill=light-gray] (5,3) -- (7,3) -- (7,2) -- (5,2) -- cycle;
\fill[line width=2pt,color=blue,fill=blue,fill opacity=1] (4,2) -- (6,2) -- (6,1) -- (4,1) -- cycle;
\fill[line width=2pt,color=cyan,fill=cyan,fill opacity=0.9] (3,1) -- (5,1) -- (5,0) -- (3,0) -- cycle;
\fill[line width=2pt,fill=light-gray] (5,1) -- (7,1) -- (7,0) -- (5,0) -- cycle;
\fill[line width=2pt,color=cyan,fill=cyan,fill opacity=0.9] (0,1) -- (0,2) -- (2,2) -- (2,1) -- cycle;
\fill[line width=2pt,color=cyan,fill=cyan,fill opacity=0.9] (6,2) -- (8,2) -- (8,1) -- (6,1) -- cycle;
\fill[line width=2pt,color=blue,fill=blue,fill opacity=1] (7,3) -- (9,3) -- (9,2) -- (7,2) -- cycle;
\fill[line width=2pt,color=blue,fill=blue,fill opacity=1] (7,1) -- (9,1) -- (9,0) -- (7,0) -- cycle; 
\fill[line width=2pt,fill=light-gray] (8,2) -- (10,2) -- (10,1) -- (8,1) -- cycle;
\end{tikzpicture}
}
\hfill
\resizebox{0.43\linewidth}{!}{
\begin{tikzpicture}[line cap=round,line join=round,>=triangle 45,x=2.2cm,y=2.2cm]
\clip(-6.6,0.5) rectangle (-3.4,2.5);
% background
\fill[line width=2pt,fill=light-gray] (-7,0) -- (-3,0) -- (-3,3) -- (-7,3) -- cycle;
% thick lines
\fill[line width=10pt,color=cyan,fill=cyan,fill opacity=0.9] (-6.2,3.3) -- (-6.2,-0.3) -- (-5.8,-0.3) -- (-5.8,3.3) -- cycle;
\fill[line width=0pt,color=cyan,fill=cyan,fill opacity=0.9] (-5.2,3.4) -- (-5.2,-0.2) -- (-4.8,-0.2) -- (-4.8,3.4) -- cycle;
\fill[line width=0pt,color=cyan,fill=cyan,fill opacity=0.9] (-4.2,3.3) -- (-4.2,-0.3) -- (-3.8,-0.3) -- (-3.8,3.3) -- cycle;
\fill[line width=0pt,color=cyan,fill=cyan,fill opacity=0.9] (-7.4,2.2) -- (-7.4,1.8) -- (-2.6,1.8) -- (-2.6,2.2) -- cycle;
\fill[line width=0pt,color=cyan,fill=cyan,fill opacity=0.9] (-7.4,1.2) -- (-7.4,0.8) -- (-2.6,0.8) -- (-2.6,1.2) -- cycle;

% boxes
\fill[line width=2pt,color=blue,fill=blue,fill opacity=1] (-6.3,2.3) -- (-6.3,1.7) -- (-5.7,1.7) -- (-5.7,2.3) -- cycle;
\fill[line width=2pt,color=blue,fill=blue,fill opacity=1] (-6.3,1.3) -- (-6.3,0.7) -- (-5.7,0.7) -- (-5.7,1.3) -- cycle;
\fill[line width=2pt,color=blue,fill=blue,fill opacity=1] (-5.3,2.3) -- (-5.3,1.7) -- (-4.7,1.7) -- (-4.7,2.3) -- cycle;
\fill[line width=2pt,color=blue,fill=blue,fill opacity=1] (-4.3,2.3) -- (-4.3,1.7) -- (-3.7,1.7) -- (-3.7,2.3) -- cycle;
\fill[line width=2pt,color=blue,fill=blue,fill opacity=1] (-5.3,1.3) -- (-5.3,0.7) -- (-4.7,0.7) -- (-4.7,1.3) -- cycle;
\fill[line width=2pt,color=blue,fill=blue,fill opacity=1] (-4.3,1.3) -- (-4.3,0.7) -- (-3.7,0.7) -- (-3.7,1.3) -- cycle;
\end{tikzpicture}
}
\hspace{2mm}
\hfill

\vspace{2mm}
\caption{
Two colorings of the plane with three colors. 
The second is generalized above to higher 
dimensions.
}
\label{fig:partition}
\end{figure}

\begin{remark}\label{rem:color}
Above we used a partition of $\R^m$ with $m+1$ colors. In general, an $N$-color partition of $\R^m$ with the two properties from above results in an algorithm using $\lceil
\log_2  N \rceil +1$ continuous measurements.
It is not possible to find a partition with less than $m+1$ colors. 
This is related to the Lebesgue covering dimension of $\R^m$ which is $m$,
see \cite[Chapter 2]{Pears}. 
Still, there might exist 
partitions with more 
colors that are \emph{easier to implement}.
\end{remark}

\begin{remark}
Note that the \emph{full} algorithm $R_m^\eps$ given above is not Lip\-schitz continuous;
it takes values in a discrete set.
(There cannot be a continuous 
algorithm,  %   since 
the Borsuk-Ulam theorem 
does not allow it.)  
We do not think that this is an issue, because it seems quite natural to use algorithms
that allow 
some kind of discrete decision, 
such as bisection.  
Still, 
from Theorem~\ref{thm:main} we obtain that 
\[
\|R_m^\eps(x)-R_m^\eps(y)\|\le \|x-y\|+2\eps, 
\]
which might be considered as some kind of \emph{stability}. 
\end{remark}

\medskip
\goodbreak

\section{Discussion}\label{sec:discussion}
Did we really construct a clever algorithm 
for the recovery 
of $x \in \R^m$ if $m$ is large? 
We made two basic assumptions: 
\begin{enumerate} 
\item[1.] 
If $\lambda \colon \R^m \to \R$
is continuous then we can compute values 
$\lambda(x)$;
i.e., all continuous functionals are 
admissible as 
information.
 \item[2.]
The information can be chosen adaptively, i.e., 
$\lambda_{k+1}$ may depend on 
the (already computed) values 
$y_i = \lambda_i (x)$ 
for $i=1,2, \dots , k$. 
\end{enumerate} 

The use of adaption is widespread and 
can be easily implemented on a computer,
we do not see a problem here. 
The use of arbitrary continuous information 
functionals $\lambda \colon \R^m \to \R$
is certainly more problematic. 
On the one hand, 
there seems to be an increased interest recently in arbitrary continuous information. We refer to the notion of
manifold widths and other nonlinear widths 
which are discussed in the next section.
These widths
are often motivated as benchmarks for corresponding numerical procedures. 
On the other hand, 
it is shown in this paper that
continuous information allows an 
exponential speed-up over linear information. 
% as shown in this paper.
We think this is unrealistic and therefore
guess that the notion of 
arbitrary continuous information 
is too general. 

In the literature on information-based
complexity
it is usually assumed that only 
(certain or all) \emph{linear} functionals 
are admissible. 
Then we have two possibilities 
to generalize over 
linear information in the form of a linear mapping $N\colon \R^m \to \R^n$: 
\begin{itemize} 
\item 
we may use \emph{adaption}, 
as in this paper; 
\item 
we may use \emph{randomization}, 
not discussed in this paper.
\end{itemize}
The adaption problem in this setting 
was studied by \cite{Ba71,GM80,TW80} 
many years ago, 
see \cite{CW04,No96,NW1} for 
more results.   
We recently wrote the paper \cite{KNU24} 
with old and new results on the 
power of adaption and randomization.
If these features are combined 
(adaption \emph{and} randomization) 
then they are very useful for the recovery 
of $x \in \R^m$, see~\cite{KNW24,KW24},
though the approximation results are still far away from what is possible with arbitrary continuous information.

\medskip

In addition to (or instead of) linear information 
one might
have reasons to allow
\emph{some} other continuous information 
functionals. 
In some applications
one can measure a norm $\Vert x \Vert$
of $x$ directly, since it is a kind of 
``energy'' or ``intensity'' 
that one can measure. 
In the context of phase retrieval, 
it is assumed that 
only absolute values of certain linear functionals 
are available, 
see \cite{PSW20} for  
references and related problems.
As an open problem we can ask 
for complexity results if, 
instead of all linear or continuous functionals, 
we allow all
homogeneous or
convex or analytic  functionals 
as information mappings.

\medskip

Another interesting open problem is to prove lower bounds on the minimal number $n$ of continuous measurements needed to recover vectors with dimension $m\ge 3$ with high precision. 
From the Borsuk-Ulam Theorem, we only know that $n\ge 2$ is required, while Theorem~\ref{thm:main} states that $n\le \lceil \log_2(m+1)\rceil +1$. 
We mention in passing that with arbitrary discontinuous information, already a single measurement $\lambda : \R^m \to \R$ would be enough since the cardinalities of $\R^m$ and $\R$ are the same.

\medskip 

\begin{remark} 
Our main result is trivial if 
$$
n (m) = \lceil \log_2(m+1)\rceil +1  \ge  m,  
$$
i.e., if $m \le 4$. 
When we started to work on this problem 
during the summer 2024, first in 
Passau and then in Cambridge, 
we discussed the ``simplest case'' 
$m=3$ and $n=2$.
If the input is from the Euclidean ball 
in $\R^3$ then one can use adaption (and $n=2$)
to improve over all nonadaptive methods 
with worst case error 1; 
one can start with the functional 
$\ell_1(x) = x_1 - |x_3|$
and, depending on the outcome, continue with 
either $\ell_2(x)=x_3$ or $\ell_2(x)=x_2$. 
We leave it to the reader to think 
about the geometrical details. 
\end{remark} 

% \section{Upper bounds for infinite-dimensional problems}
\section{Power of adaption for continuous information}
\label{sec:infinite}

We now show that
algorithms using adaptive 
continuous measurements
provide an exponential speed-up
compared to
algorithms using non-adaptive continuous measurements. 
This also holds for infinite-dimensional 
approximation problems, and 
implies various relations to other commonly used approximation quantities. 

\medskip

Assume that we have a mapping
$S\colon F \to Y$ 
from a topological space $F$ to a metric space $Y$.
We want to approximate $S(f)$ for all $f\in F$ using at most $n$ continuous measurements of $f$.
An interesting special case is that $F$ is a subset of $Y$ 
%with the topology induced by $Y$ and that
and $S(f)=f$, which 
corresponds
to approximating $f\in F$ in the norm of $Y$. 

We allow algorithms $A_n\colon F \to Y$ 
of the form
\begin{equation*}\label{eq:def-alg}
 A_n(f) \,=\, \Phi(\lambda_1(f),\hdots,\lambda_n(f))
\end{equation*}
with adaptively chosen continuous measurements $\lambda_i \colon F \to \R$ and an arbitrary reconstruction map $\Phi \colon \R^n \to Y$.
That is, the choice of $\lambda_i$ % depends 
may depend
on the already chosen $\lambda_j$ and the computed values $\lambda_j(f)$ for $j<i$.
We consider the minimal worst-case error of algorithms that use at most $n$ continuous measurements, i.e., 
\[
 e_n^{\contada}(S)
 \,:=\, \inf_{A_n} \, \sup_{f\in F}\, d_Y\big(S(f),A_n(f)\big),
 %e(A_n,S,F)
\]
where the infimum ranges over all $A_n$ as above.

\medskip

We start by comparing the error numbers $e_n^{\contada}(S)$ with the \emph{manifold widths} of $S$, which are defined by 
    \[
    \delta_n(S) \,:=\,  
    % \inf_{N\in C(X,{\R^n})}\, 
    \inf_{\substack{N\in C(F,{\R^n}) \\ \Phi\in C(\R^n,Y)}}\, 
    \sup_{f\in F}\, d_Y\big(S(f),\Phi(N(f))\big),
    \]
    where $C(M,N)$ denotes the class of continuous mappings between topological spaces $M$ and $N$. 
These may be viewed as the minimal worst-case errors of \emph{continuous} algorithms using \emph{non-adaptive} continuous measurements. 

\begin{remark}
The quantities $\delta_n(S)$ and slight variations thereof are also known under the names of \emph{Aleksandrov widths} 
and \emph{continuous non-linear widths}.
We choose to call them manifold widths since this 
seems to be the most common name nowadays. The 
manifold widths are most often considered in the 
special case $F\subset Y$ and $S(f)=f$ as 
mentioned above, in which case they are denoted by 
$\delta_n(F)_Y$ instead of $\delta_n(S)$.
We refer to \cite{Al56,DHM,Ste74,Ste75} for 
early work as well as \cite{CDPW} for 
a more recent contribution.
Also note that for linear and bounded operators $S\colon X \to Y$ between Banach spaces (in which case $F$ is assumed to be the unit ball of $X$), the manifold widths form a scale of s-numbers, see \cite[Thm.~1]{Mathe90}.
\end{remark}

As a consequence of our 
finite-dimensional result,
we obtain the 
following upper bound.

\begin{theorem}\label{thm:mani}
For any mapping $S\colon F \to Y$ from a topological space $F$ to a metric space $Y$ and any 
$n\ge2$, it holds that
\[
 e_n^{\contada}(S)
 \;\le\; \delta_{2^{n-2}}(S).
\]
\end{theorem}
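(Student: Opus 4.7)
My plan is to simulate any near-optimal non-adaptive continuous scheme for $S$ by compressing its $\R^m$-valued intermediate output via Theorem~\ref{thm:main}, with $m := 2^{n-2}$. The key counting identity is that $\lceil\log_2(m+1)\rceil + 1 = n$ whenever $n\ge 2$, so the algorithm $R_m^\eps$ recovers any vector in $\R^m$ within precision $\eps$ using exactly $n$ adaptive Lipschitz-continuous measurements\,---\,precisely the measurement budget available on $F$.

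Fix $\eta>0$. By the definition of $\delta_m(S)$, pick continuous $N\colon F\to\R^m$ and $\Phi\colon\R^m\to Y$ with $\sup_{f\in F} d_Y(S(f), \Phi(N(f))) \le \delta_m(S)+\eta$. I would build the adaptive algorithm $A_n$ by pulling back the measurements of $R_m^\eps$: its $i$th measurement of $f$ is $\lambda_i \circ N \colon F\to\R$, where $\lambda_i$ is the $i$th adaptive Lipschitz measurement of $R_m^\eps$ applied to the variable $x = N(f)$. Continuity in $f$ and the adaptive structure carry over directly from $\lambda_i$ and $N$. After $n$ such measurements, the algorithm pinpoints a cell index $i^*$ with $N(f)\in \eps \overline{D_{i^*}}$; using a precomputed representative $z_i \in \eps \overline{D_{i}}$ for each cell $i$, I would output $A_n(f) := \Phi(z_{i^*})\in Y$. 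The resulting reconstruction map $\R^n \to Y$ need not be continuous, which is allowed by the definition of $e_n^{\contada}(S)$.

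The triangle inequality then yields
\[
d_Y(S(f), A_n(f)) \,\le\, d_Y(S(f),\Phi(N(f))) + d_Y(\Phi(N(f)),\Phi(z_{i^*})) \,\le\, \delta_m(S) + \eta + \omega_\Phi(\eps),
\]
using $\|N(f) - z_{i^*}\|_\infty \le \eps$ and any modulus of continuity $\omega_\Phi$ of $\Phi$ valid on a region containing $N(f)$ together with its $\eps$-neighborhood, uniformly in $f$. To produce such a uniform modulus, I would precompose $N$ with a homeomorphism $\sigma\colon\R^m\to(-1,1)^m$ (coordinatewise $\tanh$) and replace $\Phi$ by $\Phi\circ\sigma^{-1}$; this puts $N(F)$ inside a bounded open set, and one then arranges (by a further rescaling) that $N(F)$ sits compactly inside $(-1,1)^m$ so that $\Phi\circ\sigma^{-1}$ is uniformly continuous on a compact superset of the $\eps$-neighborhood of $N(F)$. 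Choosing $\eps$ with $\omega_\Phi(\eps)\le\eta$ and letting $\eta\to 0$ gives $e_n^{\contada}(S) \le \delta_{2^{n-2}}(S)$.

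The principal technical obstacle is this uniform-continuity step: the composed $\Phi\circ\sigma^{-1}$ is continuous on the open set $(-1,1)^m$ but can blow up near the boundary, so the argument requires keeping $N(F)$ strictly away from $\partial(-1,1)^m$ and taking $\eps$ small enough that every cell hit by $N(F)$ stays inside a fixed compact subregion of $(-1,1)^m$. With this boundary bookkeeping handled, the rest is a direct plug-in of Theorem~\ref{thm:main} into the given non-adaptive continuous scheme.
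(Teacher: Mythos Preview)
Your counting identity $n(2^{n-2})=n$ is correct, and pulling the measurements of $R_m^\eps$ back through $N$ is exactly the right mechanism. The gap is the uniform-continuity step, and the homeomorphism trick does not close it. If $N(F)\subset\R^m$ is unbounded, then $\sigma(N(F))$ accumulates on $\partial(-1,1)^m$; no ``further rescaling'' changes this, because rescaling the image forces the corresponding rescaling of the domain of $\Phi\circ\sigma^{-1}$, and the blow-up at the boundary simply moves with it. Concretely, take $F=Y=\R$, $S=\mathrm{id}$, $m=1$, $N=\mathrm{id}$, and $\Phi=\mathrm{id}$ (so $\delta_1(S)=0$); after your substitution the new decoder is $\tanh^{-1}$ on $(-1,1)$, and for any fixed $\eps>0$ there are $f$ with $\tanh(f)$ within $\eps$ of $\pm1$, where the oscillation of $\tanh^{-1}$ on the $\eps$-cell is unbounded. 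So with a fixed, non-adaptive precision $\eps$ the error term $\omega_\Phi(\eps)$ cannot be made small uniformly in $f$, and your final inequality fails.

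The paper handles this by spending one measurement adaptively on $R=\|N(f)\|_\infty$; this localises $N(f)$ to the compact cube $Q$ of radius $R+1$, on which $\Phi$ is uniformly continuous, so the precision $\delta$ (and hence the scaled partition used by $R_m^\delta$) can be chosen \emph{after} seeing $R$. That extra measurement would seem to bring the total to $n(m)+1=n+1$, but the paper recoups it: if the scaling of the coloring is chosen so that $R$ is a grid multiple, then $N(f)$ lies on the boundary of a top cube and therefore avoids the interior color, leaving only $m$ colors to bisect. This reduces the second stage to $n(m-1)=\lceil\log_2 m\rceil+1=n-1$ measurements, for a total of $n$. Your argument is missing both ingredients: the adaptive choice of precision, and the color-saving that pays for it.
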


\medskip

\begin{proof}
 Let $\varepsilon >0$ and let $m=2^{n-2}$.
 Consider continuous mappings $N\colon F \to \R^m$ and $\Phi\colon \R^m \to Y$
 such that 
 \[
  \forall f\in F \colon \quad \Vert S(f) - \Phi(N(f)) \Vert_Y \,\le\, \delta_{m}(S)+\eps.
\]
 We construct an algorithm that uses $n$ adaptive continuous measurements and computes $S(f)$ for $f\in F$ with precision $ \delta_m(S)+2\eps$.

 As a first step, we take the continuous measurement $R=\Vert N(f) \Vert_\infty$.
 The function $\Phi$ is uniformly continuous on the cube $Q \subset \R^m$ with center zero and radius $R+1$ and hence there is some $\delta \in (0,1)$ with
 \[
 \forall y,\tilde y\in Q \colon \quad \Vert y - \tilde y \Vert_\infty \le \delta 
 \quad\Rightarrow\quad
 \Vert \Phi(y) - \Phi(\tilde y) \Vert_Y \le \eps.
 \]

 As a second step, we apply Theorem~\ref{thm:main}
 to learn $N(f)$ with precision $\delta$ using at most $n(m)$ continuous measurements of $N(f)$, 
 i.e., with the functionals 
 $\lambda_J(N(f))$
 and $\lambda^*(N(f))$ described in 
 Section~\ref{sec:intro}.
 Since $N$ is continuous, those are continuous measurements of $f$.
 Since both $y:=N(f)$ and our approximation $\tilde y:=R_m^\delta(N(f))$ of $N(f)$ are contained in $Q$,
we approximate $\Phi(N(f))$ with precision $\eps$ by $\Phi(R_m^\delta(N(f)))$.
 This means that we have learned $S(f)$
 with an error of at most $\delta_m(S)+2\eps$
 with only $n(m)+1 = \lceil \log_2(m+1)\rceil +2 $ continuous measurements of $f$.

 In fact, a closer look at the coloring used in the algorithm shows that only $m$ instead of $m+1$ colors occur on the boundary of the unit cube.
 Thus, if we choose the scaling parameter for the coloring such that $R$ is a multiple, the first measurement $R=\Vert N(f) \Vert_\infty$ already excludes one of the colors and $n(m-1)$ instead of $n(m)$ measurements suffice in the second step.
 This results in a total of 
 % $n=n(m-1)+1$ 
$n=\lceil \log_2(m)\rceil +2$ 
measurements.
\end{proof}

\medskip

The main purpose of this section is to compare $e_n^{\contada}(S)$ with the error of (not necessarily continuous) algorithms using only \emph{non-adaptive} continuous measurements, i.e., with
\[
 e_n^{\contnon}(S)
 \,:=\, \inf_{\substack{N\in C(F,\R^n) \\ \Phi
 \colon \R^n \to Y}}\, 
     \sup_{f\in F}\, d_Y\big(S(f),\Phi(N(f))\big).
\]
For this, we need the following lemma,
which is similar to \cite[Lemma~2.1]{DKLT93}.

\medskip

\begin{lemma}\label{lem:mani2}
For any continuous mapping $S\colon F \to Y$ from a compact metric space $F$ to a normed space $Y$ and any 
$n\in\N$, it holds that
    \[
     \delta_n(S) \,\le\, 2\cdot e_n^{\contnon}(S).  % \widetilde c_n(S).
    \]
\end{lemma}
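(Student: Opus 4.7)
\medskip
\noindent\textbf{Proof proposal.}
My plan is to start from a nearly optimal non-adaptive pair $(N,\Phi)$ for $e_n^{\contnon}(S)$, keep the continuous measurement map $N$ unchanged, and replace the (possibly discontinuous) reconstruction $\Phi$ by a \emph{continuous} map $\tilde\Phi$, at the cost of at most a factor of $2$ in the worst-case error.

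Fix $\eps>0$ and choose continuous $N\colon F\to\R^n$ together with a possibly discontinuous $\Phi\colon\R^n\to Y$ satisfying $\sup_{f\in F}\|S(f)-\Phi(N(f))\|_Y\le e_n^{\contnon}(S)+\eps$. The first key observation is that $S$ is almost constant on the fibres of $N$: if $N(f)=N(g)$, then $\Phi$ returns the same vector for both, so the triangle inequality forces $\|S(f)-S(g)\|_Y\le 2(e_n^{\contnon}(S)+\eps)$. I would next promote this pointwise fibrewise estimate to a uniform modulus
\[
\omega(\delta) \,:=\, \sup\bigl\{\|S(f)-S(g)\|_Y \co f,g\in F,\ \|N(f)-N(g)\|\le\delta\bigr\},
\]
and show $\limsup_{\delta\to 0^+}\omega(\delta)\le 2(e_n^{\contnon}(S)+\eps)$ by a standard subsequence argument: any sequences $f_k,g_k\in F$ witnessing a violation with $\|N(f_k)-N(g_k)\|\to 0$ admit convergent subsequences $f_k\to f^*$, $g_k\to g^*$ by compactness of $F$; continuity of $N$ forces $N(f^*)=N(g^*)$, and continuity of $S$ then contradicts the fibrewise bound. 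Hence I can pick $\delta>0$ with $\omega(\delta)\le 2e_n^{\contnon}(S)+3\eps$.

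Next, I would assemble the continuous reconstruction on $K:=N(F)$, which is compact in $\R^n$. Cover $K$ by finitely many open balls $B(y_j,\delta)$ with $y_j\in K$, take a continuous partition of unity $\{\psi_j\}_{j=1}^M$ on $K$ subordinate to this cover, and for each $j$ select $f_j\in N^{-1}(y_j)$. Define
\[
\tilde\Phi_K(y) \,:=\, \sum_{j=1}^M \psi_j(y)\,S(f_j), \qquad y\in K.
\]
For any $f\in F$ and $y=N(f)\in K$, only indices $j$ with $\|y-y_j\|<\delta$ contribute, and for those the uniform modulus gives $\|S(f)-S(f_j)\|_Y\le\omega(\delta)\le 2e_n^{\contnon}(S)+3\eps$; the convex combination inherits this bound, yielding $\|\tilde\Phi_K(N(f))-S(f)\|_Y\le 2e_n^{\contnon}(S)+3\eps$.

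To finish, I would extend $\tilde\Phi_K$ from the closed set $K\subset\R^n$ to a continuous map $\tilde\Phi\colon\R^n\to Y$ via the Dugundji extension theorem, which applies because $Y$ is a normed (hence locally convex) space. The pair $(N,\tilde\Phi)$ is then admissible for $\delta_n(S)$ with worst-case error at most $2e_n^{\contnon}(S)+3\eps$; letting $\eps\to 0$ yields the claim. The main technical obstacle I expect is the compactness step that lifts the fibrewise oscillation bound to the uniform modulus $\omega$ near $0$—this is where compactness of $F$ must compensate for the fact that the continuous map $N$ need not be open; the partition-of-unity assembly and the Dugundji extension are then fairly standard.
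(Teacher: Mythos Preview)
Your proposal is correct and follows essentially the same approach as the paper: start from a near-optimal pair $(N,\Phi)$, derive the fibrewise oscillation bound, use compactness of $F$ to upgrade this to a local bound (you via the modulus $\omega$ and sequential compactness, the paper via an explicit open-cover argument), and then build a continuous reconstruction by a partition-of-unity convex combination. The only cosmetic difference is that the paper writes down an explicit partition-of-unity formula valid on all of $\R^n$ at once, so it does not need to invoke Dugundji's extension theorem.
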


\begin{proof}
Let $\varepsilon>0$. 
Choose a continuous mapping $N\co F\to\R^n$
and an arbitrary mapping $\Phi_0 \co \R^n \to Y$ such that
\[
 \sup_{f\in F}\,
 \Vert S(f) - \Phi_0(N(f))\Vert_Y
 \,\le\, e_n^{\contnon}(S) + \varepsilon.
\]
We have to find a continuous mapping $\Phi \co \R^n \to Y$ that satisfies a similar inequality.
By the triangle inequality, the above inequality implies 
\[ 
 \sup_{\substack{f,g\in F: \\ N(f)=N(g)}}
 \Vert S(f) - S(g)\Vert_Y
 \,\le\, 2e_n^{\contnon}(S) + 2\varepsilon
 \,=:\, \Delta. %\widetilde{c}_n(S) + \varepsilon.
\]
We consider $N$ as a mapping between the compact spaces $F$ and $N(F)$
and $S$ as a mapping between the compact spaces $F$ and $S(F)$.
We first show that there is %$\eta>0$ and
a covering $\{C_i\}_{i=1}^M$ of $N(F)$ with non-empty open sets such that %$\diam(C_i)<\eta$ and
$\diam(S(N^{-1}(C_i)))\le \Delta + 2\varepsilon$ 
for $i=1,\dots,M$. 

Since $S$ is uniformly continuous, we can find $\delta>0$ such that we have $S(B_\delta(A)) \subseteq B_\varepsilon(S(A))$ for every set $A\subseteq F$. Here, $B_\delta(A)$ denotes the open $\delta$-neighborhood of a set $A$.
For any $y\in N(F)$, we can write $N^{-1}(y)$ as the 
intersection of all sets $N^{-1}
(\overline{B_\eta(y)})$ for $\eta>0$. 

In particular, we have the packing
\[
 B_\delta(N^{-1}(y)) \,\supseteq\, \bigcap_{\eta>0} N^{-1}(\overline{B_\eta(y)}).
\]
By compactness of $F \setminus B_\delta(N^{-1}(y))$ and openness of $F \setminus N^{-1}(\overline{B_\eta(y)})$,
there is a finite subpacking, such that we find some $\eta(y)>0$ with 
\[
 B_\delta(N^{-1}(y)) \,\supseteq\, N^{-1}(\overline{B_{\eta(y)}(y)})
 \,\supseteq\, N^{-1}(B_{\eta(y)}(y)).
\]

On the other hand, since $N(F)$ is compact and covered by the open sets $B_{\eta(y)}(y)$ for $y\in N(F)$, there are $y_1,\hdots,y_M \in N(F)$ such that
\[
 N(F) \,\subseteq\, \bigcup_{i=1}^M B_{\eta(y_i)}(y_i).
\]
We put $C_i = B_{\eta(y_i)}(y_i)$. 
% and $\eta = \max_{i\le M} \eta_i$.
Then, indeed, the sets $\{C_i\}_{i=1}^M$ form a covering of $N(F)$ and, since 
\[
 S(N^{-1}(C_i)) \,\subseteq\, S(B_\delta(N^{-1}(y_i)))
 \,\subseteq\, B_\varepsilon(S(N^{-1}(y_i))),
\]
where the sets $S(N^{-1}(y_i))$ have a diameter at most $\Delta$, it holds that $\diam(S(N^{-1}(C_i)))\le \Delta + 2\varepsilon$, as claimed.

Now we define a continuous mapping $\Phi \colon \R^n \to Y$ such that
\begin{equation}\label{eq:cont-phi}
\sup_{f\in F}\|S(f) - \Phi(N(f))\|_Y \le  \Delta + 2\eps.
\end{equation}
For each $i\le M$, we choose some $g_i\in S(N^{-1}(C_i))$.
For $y\in\R^n$, we put 
\[
\Phi(y) \,:=\, \sum_{i=1}^M \frac{\dist(y,N(F)\setminus C_i)}{\sum_{j=1}^M \dist(y,N(F)\setminus C_j)}\cdot g_i,
\]
where the distance in $\R^n$ is measured in an arbitrary norm.
Since the denominator is always positive, the mapping $\Phi$ is continuous.
Let now $f \in F$.
For all $i\le M$ with $\dist(N(f),N(F)\setminus C_i)\neq0$ we have $N(f)\in C_i$
and hence $S(f) \in S(N^{-1}(C_i))$.  
This implies $\Vert S(f) - g_i \Vert_Y \le \Delta + 2\varepsilon$. 
Since $S(f) - \Phi(N(f))$  is a convex combination of such differences $S(f) - g_i$, we obtain~\eqref{eq:cont-phi}.
This implies \[
\delta_n(S)
\,\le\, \Delta + 2\varepsilon \,=\, 2e_n^{\contnon}(S) + 4\varepsilon,
\]
and since $\eps>0$ was arbitrary, the proof is complete. \\
\end{proof}

As an immediate consequence of Theorem~\ref{thm:mani} and Lemma~\ref{lem:mani2}, 
we get the following general statement on the advantage of continuous adaptive over continuous non-adaptive information. 

\begin{theorem}\label{thm:ada-main}
For any continuous mapping $S\colon F \to Y$ from a compact metric space $F$ to a normed space $Y$ and any $n\ge2$, it holds that
\[
 e_n^{\contada}(S)
 \;\le\; 2 \cdot e_{2^{n-2}}^{\contnon}(S).
\]
\end{theorem}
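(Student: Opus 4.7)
The plan is essentially a one-step chaining argument: Theorem~\ref{thm:mani} and Lemma~\ref{lem:mani2} have been stated precisely so that they compose to yield the desired inequality. I would not need any new idea beyond quoting these two results at the correct index.

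Concretely, first I would apply Theorem~\ref{thm:mani} directly, which gives
\[
 e_n^{\contada}(S) \,\le\, \delta_{2^{n-2}}(S),
\]
valid for all $n \ge 2$, with no restriction on $F$ or $Y$ beyond those already imposed. Then I would apply Lemma~\ref{lem:mani2} with the index $n$ replaced by $2^{n-2}$, which yields
\[
 \delta_{2^{n-2}}(S) \,\le\, 2\, e_{2^{n-2}}^{\contnon}(S).
\]
Chaining these two inequalities produces exactly the claimed bound.

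The only thing to verify carefully is that the hypotheses of Lemma~\ref{lem:mani2} are indeed met: the lemma requires $F$ to be a compact metric space, $Y$ to be a normed space, and $S$ to be continuous. These are precisely the standing assumptions of Theorem~\ref{thm:ada-main}, so the lemma applies verbatim. Theorem~\ref{thm:mani} needs even less, so no additional check is required there. There is no genuine obstacle in this proof; the work was done in proving Theorem~\ref{thm:mani} (which required the finite-dimensional Theorem~\ref{thm:main}) and in Lemma~\ref{lem:mani2} (which required the covering/partition-of-unity construction to upgrade a merely measurable reconstruction map into a continuous one). Writing the proof amounts to stating the two applications and noting that the hypotheses pass through.
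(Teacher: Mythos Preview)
Your proposal is correct and matches the paper's own argument exactly: the paper states Theorem~\ref{thm:ada-main} as an immediate consequence of Theorem~\ref{thm:mani} and Lemma~\ref{lem:mani2}, and your chaining of these two results at the index $2^{n-2}$, together with the observation that the hypotheses of Lemma~\ref{lem:mani2} are precisely those of Theorem~\ref{thm:ada-main}, is all that is needed.
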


\medskip

We also want to compare $e_n^{\rm cont}(S)$ and $e_n^{\contnon}(S)$ for bounded linear operators $S\colon F \to Y$, where $F$ is assumed to be the unit ball of a Banach space $X$ and $Y$ is a Banach space.
This setting is not covered by the previous theorem since $F$ is not compact. More precisely, it is not compact, if $X$ is infinite-dimensional
and $F$ is equipped with the metric induced by 
$X$; if we consider continuous measurements with 
respect to a different metric, the setting might 
be covered by Theorem~\ref{thm:ada-main}.

\medskip

A helpful tool in the analysis of the aforementioned setting are the Bernstein numbers
\[
 b_n(S) 
 \,:=\, 
 \sup_{\substack{V_n\subset X\\\dim(V_n)=n+1}}\, \inf_{\substack{f\in V_n\\ \Vert f \Vert_X = 1}} \, \|S(f)\|_Y
\]
and the Kolmogorov numbers
\[
 d_n(S)
 \,:=\, \inf_{\substack{V_n\subset Y\\\dim(V_n)\le n}}\,
 \sup_{\substack{f\in X\\ \Vert f \Vert_X \le 1}}\,\inf_{g\in V_n} \Vert S(f) - g \Vert_Y.
\]
From the Borsuk-Ulam theorem, it follows that
\begin{equation}\label{eq:bn}
 e_n^{\contnon}(S) \,\ge\, b_n(S).
% b_n(S) \,\le\, e_n^{\contnon}(S).
\end{equation}
Indeed, for a continuous measurement map $N\colon F \to \R^n$ and any $V_n \subset X$ of dimension $n+1$, the Borsuk-Ulam theorem gives some $f \in V_n$ with $\Vert f \Vert_X =1$ and $N(f)=N(-f)$,
such that any algorithm based on $N$ makes an error of at least $\Vert S(f) \Vert_Y$.
On the other hand, at least for strictly convex $Y$, it is not hard to show that
\begin{equation}\label{eq:dn}
 \delta_n(S) \,\le\, d_n(S).
\end{equation}
The proof uses that, for strictly convex $Y$ and any finite dimensional subspace $V_n \subset Y$, 
there is a unique continuous operator $P\colon Y\to V_n$ that maps every $y\in Y$ to its best approximation from $V_n$.
Hence,
\begin{equation}\label{eq:bnendn}
 b_n(S) \,\le\, e_n^{\contnon}(S) \,\le\, \delta_n(S) \,\le\, d_n(S).
\end{equation}
We add that, up to a constant that only depends on $Y$, \eqref{eq:dn} remains true for any space $Y$ that admits an equivalent strictly convex norm. 
According to~\cite{Day55}, 
this holds for every separable Banach space, 
but also for any $L_\infty(\mu)$-space with a $\sigma$-finite measure.

If $X$ and $Y$ are Hilbert spaces (in which case $Y$ is strictly convex), the Bernstein numbers and Kolmogorov numbers coincide, see \cite{Pietsch-s}, so that we have
\[
 b_n(S) \,=\, e_n^{\contnon}(S) \,=\, \delta_n(S) \,=\, d_n(S).
\]
With Theorem~\ref{thm:mani}, this implies the following.

\medskip

\begin{coro}\label{cor:power-of-adaption}
    Let $F$ be the unit ball of a Hilbert space, $Y$ be a Hilbert space, and $S\colon F \to Y$ be a bounded linear operator. Then, 
    \[
     e_n^{\contada}(S) \,\le\, e_{2^{n-2}}^{\contnon}(S).
    \]
\end{coro}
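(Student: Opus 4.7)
The plan is to stitch together two results that are already in place: Theorem~\ref{thm:mani}, which bounds the adaptive continuous error by a manifold width, and the chain of inequalities \eqref{eq:bnendn}, which under the Hilbert-space hypothesis collapses to a chain of equalities. So the proof is essentially a one-liner, and the work is in checking that the hypotheses line up.

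First I would apply Theorem~\ref{thm:mani} to the operator $S\colon F \to Y$ in the corollary. The theorem only asks that $F$ be a topological space and $Y$ a metric space; the unit ball of a Hilbert space sits inside a normed (hence metric) space, so this is immediate. With $m:=2^{n-2}$ this yields
\[
 e_n^{\contada}(S) \,\le\, \delta_{m}(S).
\]

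Next I would invoke the chain~\eqref{eq:bnendn}, namely $b_m(S) \le e_m^{\contnon}(S) \le \delta_m(S) \le d_m(S)$. The last inequality is the one that needs strict convexity of $Y$, which holds automatically in a Hilbert space. Then I would cite the Pietsch identity $b_m(S)=d_m(S)$ stated in the excerpt for the Hilbert-space setting, which forces every inequality in the chain to be an equality; in particular $\delta_m(S) = e_m^{\contnon}(S)$. Substituting this into the estimate from the previous paragraph gives
\[
 e_n^{\contada}(S) \,\le\, \delta_{2^{n-2}}(S) \,=\, e_{2^{n-2}}^{\contnon}(S),
\]
which is the claim.

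There is no real obstacle; the only points deserving a sentence of care are that (i) Theorem~\ref{thm:mani} does not require compactness of $F$, so the infinite-dimensional unit ball is admissible, and (ii) the equality $b_m = d_m$ in Hilbert spaces is used to upgrade the one-sided bound $e_m^{\contnon}(S) \le \delta_m(S)$ into an equality, which is what allows us to replace the manifold width on the right-hand side by $e_{2^{n-2}}^{\contnon}(S)$.
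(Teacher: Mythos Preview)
Your proposal is correct and follows exactly the paper's own argument: apply Theorem~\ref{thm:mani} to obtain $e_n^{\contada}(S)\le\delta_{2^{n-2}}(S)$, then use the chain~\eqref{eq:bnendn} together with the Pietsch identity $b_m(S)=d_m(S)$ in the Hilbert-space setting to collapse $\delta_{2^{n-2}}(S)$ to $e_{2^{n-2}}^{\contnon}(S)$. Your observation that Theorem~\ref{thm:mani} does not require compactness of $F$ is precisely the point the paper emphasizes just before the corollary.
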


Hence, adaption provides an exponential speed-up for the approximation of bounded linear operators between Hilbert spaces if continuous information is allowed.

\medskip

We do not know whether the inequality in Corollary~\ref{cor:power-of-adaption} remains true in the more general case that $S\colon F \to Y$ is a linear and bounded operator from a unit ball of a Banach space to a Banach space.
However, if we assume that $e_n^{\contnon}(S) \in \mathcal{O}(n^{-\alpha})$ for some $\alpha>1$, 
then by \eqref{eq:bn} we have the same decay for $b_n(S)$. 
A theorem by Pietsch, see \cite{Pie07,U24}, implies that $d_n(S)\in \mathcal{O}(n^{-\alpha+1})$, and by \eqref{eq:dn} the same holds for $\delta_n(S)$.
Thus, by Theorem~\ref{thm:mani}, the numbers $e_n^{\contada}(S)$ decay exponentially.
Hence, at least under the assumption that $e_n^{\contnon}(S) \in \mathcal{O}(n^{-\alpha})$ for some $\alpha>1$, 
and that $Y$ has an equivalent strictly convex norm,
adaption leads to an exponential speed-up also in the general case of Banach spaces.

\medskip

\begin{remark}[Lipschitz measurements]
    In analogy to $e_n^{\contada}(S)$, if $F$ is a metric space,
    one can define the minimal worst-case error $e_n^{\rm Lip}(S)$
    that can be achieved with $n$
    adaptive Lipschitz-continuous measurements,
    say, with a Lip\-schitz constant at most one.
    Revisiting the proofs shows that 
    the numbers $e_n^{\rm Lip}(S)$ 
    can be bounded above by the error of any algorithm $\Phi\circ N$ with Lipschitz-continuous information $N\colon F \to \R^{2^{n-2}}$ and continuous $\Phi$.
    In particular, Theorem~\ref{thm:mani} remains valid if we replace $e_n^{\contada}(S)$ with $e_n^{\rm Lip}(S)$ and $\delta_{2^{n-2}}(S)$ 
    with the \emph{stable manifold widths} as defined in \cite{CDPW}, i.e., 
    \[
    \delta^*_{n,\gamma}(S) \,:=\,  
    % \inf_{N\in C(X,{\R^n})}\, 
    \inf_{\substack{N\in \Lip_\gamma(F,{\R^n}) \\ \Phi\in \Lip_\gamma(\R^n,Y)}}\, 
    \sup_{f\in F}\, d_Y\big(S(f),\Phi(N(f))\big),
    \]
    where $\Lip_\gamma(M,N)$ denotes the class of Lipschitz-continuous mappings with Lipschitz constant $\gamma>0$ between metric spaces $M$ and $N$.
    Namely, it holds that
\begin{equation}
 e_n^{\rm Lip}(S)
\;\le\; \inf_{\gamma>0}\, \delta^*_{2^{n-2},\gamma}(S).  
\end{equation}
\end{remark}

\medskip

\medskip

\noindent
{\bf Acknowledgement \quad}{
The authors would like to thank the Isaac Newton Institute for Mathematical Sciences, Cambridge, for support and hospitality during the programme 
\textit{Discretization and recovery in high-dimensional spaces}, where work on this paper was undertaken. This work was supported by EPSRC grant EP/Z000580/1.
}

\medskip

% {
% \setlength{\evensidemargin}{\evensidemargin-.1\linewidth}
% \linewidth=1.1\linewidth

\linespread{0.9}

%%%%%%%%%%%%%%%%%%%%%%%%%%%%%%%%%%%%%%%%%%%

% }

\bigskip

\noindent
\address{D.K., %Faculty of Computer Science and Mathematics, 
University of Passau; %, Germany; 
\texttt{david.krieg@uni-passau.de};  \\
E.N., 
%Institute of Mathematics, 
Friedrich Schiller University Jena; %Ernst-Abbe-Platz 2, 
%Germany; 
\texttt{erich.novak@uni-jena.de}; \\
M.U., 
%Institute of Analysis, 
Johannes Kepler University Linz;
%Austria; 
\texttt{mario.ullrich@jku.at}
}

\end{document}